

\documentclass[a4paper,oneside,11pt]{article} 


\addtolength\topmargin{-.5in} 

\usepackage[bottom]{footmisc}

\usepackage{amssymb}
\usepackage{tikz}
\usepackage{amsmath}

\usepackage{comment}

\usepackage{nameref}

\usepackage{bbm}

\usepackage{enumitem, hyperref}
\makeatletter
\def\namedlabel#1#2{\begingroup
    #2%
    \def\@currentlabel{#2}%
    \phantomsection\label{#1}\endgroup
}
\makeatother

\usepackage[all,cmtip]{xy}
\usepackage{tikz-cd}
\usepackage{graphicx}

\usepackage[all]{xy}

\usepackage{mathtools}
\DeclarePairedDelimiter\ev{\langle}{\rangle}


\tolerance=1414 
\setlength\emergencystretch{1.5em} 
\hbadness=1414 
\setlength\hfuzz{.3pt} 
\widowpenalty=10000 
\raggedbottom 
\setlength\vfuzz{.3pt} 


\usepackage[utf8]{inputenc} 
\usepackage[T1]{fontenc} 
\usepackage{textcomp} 
\usepackage[english]{babel}
\usepackage[autolanguage]{numprint} 
\usepackage{hyperref} 
\usepackage{graphicx} 
\usepackage{verbatim} 


\usepackage{amsmath,amssymb,amsthm} 
\usepackage{comment}



\usepackage{fancyhdr} 
\fancyhf{} 
\fancyhead[R]{A vanishing theorem for quadratic intersection multiplicities}
\fancyhead[L]{\sc Niels Feld}   
\fancyfoot[C]{\thepage} 
\pagestyle{fancy} 

\usepackage{lipsum}


\renewcommand\[{\begin{equation}}\renewcommand\]{\end{equation}} 
\renewcommand\epsilon\varepsilon 
\renewcommand\phi\varphi 


\newenvironment{customconj}[1]
  {\innercustomconj}
  {\endinnercustomconj}

\newenvironment{customthm}[1]
  {\innercustomthm}
  {\endinnercustomthm}

\newcommand\NNN{\mathbb{N}} 
\newcommand\NN{\mathcal{N}}
\newcommand\TT{\mathcal{T}}
\newcommand\ZZ{\mathbb{Z}} 
\newcommand\ab\allowbreak 

\newcommand\GW{\operatorname{GW}}

\newcommand\VV{\mathcal{V}}

\newcommand\Spec{\operatorname{Spec}}

\newcommand\CHt{\widetilde{\operatorname{CH}}}

\newcommand\id{\operatorname{id}}

\newcommand\MW{\mathfrak{M}^{\operatorname{MW}}}

\newcommand\FFF{\mathcal{F}}

\newcommand\Sm{\operatorname{Sm}_k}

\newcommand\eeta{\boldsymbol{\eta}}
\newcommand\Om{\operatorname{\Omega}}
\newcommand\Ab{\mathcal{A}b}

\newcommand\KMW{\underline{\operatorname{K}}^{MW}}

\newcommand\AAA{\mathbb{A}}

\providecommand{\Codes}[1]
{
  \small	
  \textbf{\textit{MSC---}} #1
}

\newcommand\OO{\mathcal{O}}
\newcommand\LL{\mathcal{L}}

\newcommand\LLL{\omega}

\newcommand\SH{\mathbf{SH}}
\newcommand\PP{\mathbb{P}}

\newcommand\Grp{\mathbf{Grp}}

\theoremstyle{definition} 
\newtheorem{Def}{Definition}[subsection] 
\theoremstyle{plain} 
\newtheorem{Lem}[Def]{Lemma} 
\newtheorem{The}[Def]{Theorem} 
\theoremstyle{remark} 
\newtheorem{Exe}[Def]{Example} 
\newtheorem{Par}[Def]{} 


\title{A vanishing theorem for quadratic intersection multiplicities} 
\author{\sc Niels Feld} 

\date{} 


\begin{document} 

\maketitle 


\begin{abstract}
We study intersection theoretic problems in the setting of Chow-Witt groups with coefficients in a fixed Milnor-Witt cycle algebra over a perfect field. We prove that the product maps on such groups satisfy the following property: given two points in a regular local scheme with supports which do not intersect properly, their product vanishes. This gives an analogue of Serre's vanishing result for intersection multiplicities. 

\end{abstract}

\Codes{14C17, 14C35, 11E81}


\section{Introduction}

\subsection{Current work}

Classical intersection theory \cite{Fult} stands on the study of cycles in Chow groups. Thus, many classical results could be reinterpreted in the new context given by Chow-Witt groups. For example, if $R$ is a regular local ring of dimension $d$, and $M$ and $N$ are two $R$-modules of finite type such that the product $M\otimes N$ has a finite length, then Serre defines an intersection multiplicity 
\begin{center}
$\chi_R(M,N)=\sum_{i=0}^d (-1)^i \lg_R (\operatorname{Tor}_i^R(M,N))$,
\end{center}
where $\lg_R$ is the length of an $R$-module of finite type. Serre vanishing conjecture states that if $\dim M +\dim N  < d$, then $\chi_R(M,N)=0$ (see the work of Roberts \cite{Roberts85}, and Gillet-Soul\'e \cite{GilletSoule85,GilletSoule87} for a proof in the general case). The appeal of Serre's multiplicities comes from the fact that they can be used to compute the product of cycles in the Chow ring of a variety.
\par Following ideas of \cite{FaselSrin08}, we would like to have a similar description of the intersection product defined for Milnor-Witt cycle modules in \cite[Section 11]{Fel18}. In particular: does the intersection multiplicities of Serre have a quadratic interpretation? The question is difficult. Nevertheless, keeping in mind Serre vanishing conjecture and \cite[Conjecture 1]{FaselSrin08}, the following result seems plausible:
\begin{customconj}{1}
Let $(R,\mathfrak{m})$ a regular local ring of dimension $n$. Let $Z$ and $T$ be two closed subsets of $\Spec R$ such that $\dim Z + \dim T < n$ and $Z\cap T=\mathfrak{m}$. Then the multiplication \cite[Section 11]{Fel18}
\begin{center}
$\CHt^i_Z(R) \times \CHt^j_T(R) \to \CHt^{i+j}_{\mathfrak{m}}(R)$ 
\end{center}
is zero for any natural numbers $i,j\in \NNN$.
\end{customconj}

As evidence, we have the following theorem (see Subsection \ref{Notation} and Appendix \ref{Appendix_MW} for the notations and more details about the definitions).

\begin{customthm}{2}[see Theorem \ref{VanishingTheorem}] Let $M$ be a Milnor-Witt cycle algebra over a fixed perfect field $k$. Let $X$ be a regular local scheme of dimension $n$ over $k$ and denote by $x_0$ its closed point. Let $\VV_X$ be a virtual vector bundle over $X$. Let $Z$ and $T$ be closed subsets of $X$ such that $Z\cap T=\{ x_0 \}$. Then the intersection product
\begin{center}
$A^i_Z(X,M,\VV_Z) \times A^j_T(X,M,\VV_T) \to A^{i+j}_{x_0}(X,M,\VV_{x_0})$
\end{center}
is zero for any $i,j\in \ZZ$.

\end{customthm}
\par In particular for $M=\KMW$, this is true for the intersection product defined on the Chow-Witt ring of $X$. More generally, the theorem apply to any ring spectrum $M$ according to Theorem \ref{Thesis_Theorem}.
\par In the future, we hope to extend Theorem 2 to more general schemes $X/k$ (the theory of Milnor-Witt cycle modules can be defined over a large class of base scheme $S$, see \cite{DegliseFeldJin22}). Another direction would be to consider \textit{effective} MW-cycle modules in the sense of \cite{Fel20}.
\par Finally, we add that Conjecture 1 remains unclear if we do not assume the existence of a base field. Nevertheless, following the ideas of Gillet-Soulé \cite{GilletSoule85}, a proof may still be obtained by studying analogues of the Adams operations (see \cite{FaselHaution20}).

%

\subsection{Notation}\label{Notation}

Throughout the paper, we fix a perfect field $k$.
\par We denote by $\Grp$ and $\Ab$ the categories of (abelian) groups.
\par We consider only schemes that are essentially of finite type over $k$. All schemes and morphisms of schemes are defined over $k$. The category of smooth $k$-schemes of finite type is denoted by $\Sm$ and is endowed with the Nisnevich topology (thus, {\em sheaf} always means {\em sheaf for the Nisnevich topology}).
\par 
Let $X$ be a scheme and $x$ a point of $X$. We define the codimension of $x$ in $X$ to be $\dim (\OO_{X,x})$, the dimension of the localisation ring of $x$ in $X$ (see also \cite[TAG 02IZ]{stacks_project}). If $n$ a natural number, we denote by $X_{(n)}$ (resp. $X^{(n)}$) the set of point of dimension $n$ (resp. codimension $n$) of $X$ (this makes sense even if $X$ is not smooth).
\par By a field $E$ over $k$, we mean {\em a $k$-finitely generated field $E$}. Since $k$ is perfect, notice that $\Spec E$ is essentially smooth over $S$. We denote by $\FFF_k$ the category of such fields.
\par Let $f:X\to Y$ be a (quasi)projective lci morphism of schemes (e.g. a morphism between smooth schemes). Denote by $\LL_f$ (or $\LL_{X/Y}$) the virtual vector bundle over $Y$ associated with the cotangent complex of $f$ defined as follows: if $p:X\to Y$ is a smooth morphism, then $\LL_p$ is (isomorphic to) $\Om_{X/Y}$ the space of (Kähler) differentials. If $i:Z\to X$ is a regular closed immersion, then $\LL_i$ is the normal cone $-\NN_ZX$. If $f$ is the composite $\xymatrix{ Y \ar[r]^i & \PP^n_X \ar[r]^p & X}$  with $p$ and $i$ as previously (in other words, if $f$ is lci projective), then $\LL_f$ is isomorphic to the virtual tangent bundle $i^*\Omega_{\PP^n_X/X} - \NN_Y(\PP^n_X) $ (see also \cite[Section 9]{Fel18}).
Denote by $\LLL_f$ (or $\LLL_{X/Y}$) the determinant of $\LL_f$.
\par Let $X$ be a scheme and $x\in X$ a point, we denote by $\LL_{x}=(\mathfrak{m}_x/\mathfrak{m}_x^2)^{\vee}$ and $\LLL_{x/X}=\LLL_x$ its determinant. Similarly, let $v$ a discrete valuation on a field, we denote by $\LLL_{v}$ the line bundle $(\mathfrak{m}_v/\mathfrak{m}_v^2)^{\vee}$.
\par Let $E$ be a field (over $k$) and $v$ a valuation on $E$. We will always assume that $v$ is discrete. We denote by $\mathcal{O}_v$ its valuation ring, by $\mathfrak{m}_v$ its maximal ideal and by $\kappa(v)$ its residue class field. We consider only valuations of geometric type, that is we assume: $k\subset \mathcal{O}_v$, the residue field $\kappa(v)$ is finitely generated over $k$ and satisfies $\operatorname{tr.deg}_k(\kappa(v))+1=\operatorname{tr.deg}_k(E)$.

\par Let $E$ be a field. We denote by $\GW(E)$ the Grothendieck-Witt ring of symmetric bilinear forms on $E$. For any $a\in E^*$, we denote by $\ev{a}$ the class of the symmetric bilinear form on $E$ defined by $(X,Y)\mapsto aXY$ and, for any natural number $n$, we put $n_{\epsilon}=\sum_{i=1}^n \ev{-1}^{i-1}$. Recall that, if $n$ and $m$ are two natural numbers, then $(nm)_{\epsilon}=n_{\epsilon}m_{\epsilon}$.

\subsection*{Acknowledgement}
I deeply thank Frédéric Déglise, Jean Fasel, Marc Levine, Paul Arne \O stv\ae r, Bertrand To\" en, Fangzhou Jin and Baptiste Calmès. 
%


\section{Main theorem}

We fix $M$ a Milnor-Witt cycle algebra over the perfect base field $k$. We start with proving some geometric lemmas about the Chow-Witt groups with coefficients in $M$.
\subsection{Geometric lemmas}

\begin{Lem} \label{LemmIntersNulle}
Let $g:Y\to X$ be a smooth morphism of finite type schemes of constant fiber dimension $1$, $\sigma:X\to Y$ a section of $g$ and $\VV_X$ a virtual vector bundle over $X$. Let $i:Z\to X$ be a closed immersion and consider $\bar{Z}=g^{-1}(Z)$ the pullback
 along $g$. The induced map $\bar{\sigma}:Z\to \bar{Z}$ is such that the pushforward

\begin{center}
 $\bar{\sigma}_*:C_*(Z,M,\VV_Z)\to C_*(\bar{Z},M,\VV_{\bar{Z}})$
\end{center}
is zero on homology.
\end{Lem}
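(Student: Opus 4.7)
The strategy is to first simplify by base change, and then to realize the pushed-forward cycle as the boundary of an explicit element built from the Milnor-Witt symbol of a local equation of the section.

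First, since $\bar g\colon \bar Z \to Z$ is the base change of the smooth morphism $g$ along the closed immersion $i$, it is itself smooth of constant fiber dimension $1$ with section $\bar\sigma$. Hence one may replace the data $(g,\sigma,X,Y)$ by $(\bar g,\bar\sigma,Z,\bar Z)$ and reduce to the \emph{absolute} statement: for $g\colon Y \to X$ smooth of relative dimension $1$ with section $\sigma$, the pushforward $\sigma_*\colon C_*(X,M,\VV_X) \to C_*(Y,M,\VV_Y)$ vanishes on homology. In this reduced setting, the image $D := \sigma(X)$ is a smooth Cartier divisor in $Y$, and locally on $Y$ around $D$ one may select a defining function $t$ for $D$ together with a retraction of a neighborhood of $D$ onto $D$, identifying that neighborhood with $D\times\AAA^1$ and $D$ with the zero section; this provides the \emph{uniformizer} $t$ needed for the core computation.

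For the explicit primitive, fix $z \in X$ and $\alpha \in M(\kappa(z),\VV_z)$. Let $\eta_z \in Y$ denote the generic point of the irreducible component of the fiber $g^{-1}(\overline{\{z\}})$ containing $\sigma(z)$, and write $\tilde\alpha$ for the image of $\alpha$ along the natural pullback-via-section map $\kappa(z) \to \kappa(\eta_z)$. Set
\[
\beta_z \;:=\; \tilde\alpha \cdot [t] \;\in\; M\!\bigl(\kappa(\eta_z),\ldots\bigr),
\]
using the action of Milnor-Witt K-theory on the cycle algebra $M$. The standard residue formulas for Milnor-Witt cycle modules then give $\partial_{\sigma(z)}\beta_z = \alpha$ with trivial contribution at every other codimension-one specialization of $\eta_z$. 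Summing these primitives over all $z$ realizes $\sigma_*\alpha$ as the differential of a cycle in $C_*(Y,M,\VV_Y)$, hence as zero on homology.

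The principal obstacle is to verify that the locally defined primitives $\beta_z$ assemble into a genuine global element of $C_*(Y,M,\VV_Y)$: a priori the uniformizer $t$ is only defined locally on $Y$, and replacing $t$ by another local uniformizer $t' = ut$ changes $\beta_z$ by $\tilde\alpha\cdot[u]$. I would have to check, using the reciprocity-type axioms of a Milnor-Witt cycle module (the compatibility of residues with products of units and the agreement of different residue pairings in codimension two), that these ambiguities contribute trivially to the total residue. This is the only step where the Milnor-Witt cycle \emph{module} structure -- as opposed to a naive twisted cycle complex -- is essential, and it is where I expect the real technical work to lie.
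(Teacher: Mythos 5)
The paper itself does not give an argument for this lemma: it simply cites \cite[Lemma 4.1.5]{Fel19} (which is the Milnor--Witt analogue of Rost's section lemma for ordinary cycle modules). So I am comparing your sketch against my understanding of that cited proof.

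Your opening reduction --- replacing $(g,\sigma,X,Y)$ by $(\bar g,\bar\sigma,Z,\bar Z)$ and noting that the lemma is really the ``absolute'' statement for a smooth morphism of relative dimension $1$ with a section --- is correct and worth making explicit. The computational core you propose (pull $\alpha$ back to the generic point $\eta_z$ of the fiber, multiply by the Milnor--Witt symbol $[t]$ of a local equation of $\sigma(X)$, and take residues) is indeed the engine of Rost's and Feld's argument. So the strategy is the right one.

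However, there is a genuine gap, and it is larger than you acknowledge. The sentence ``$\partial_{\sigma(z)}\beta_z=\alpha$ with trivial contribution at every other codimension-one specialization of $\eta_z$'' is false for an individual $\beta_z$, for two independent reasons. First, $\eta_z$ specializes to $\eta_{z'}$ for every codimension-one specialization $z'$ of $z$ in $X$, and $\partial_{\eta_{z'}}\bigl(\tilde\alpha_z\cdot[t_z]\bigr)$ is governed by $\partial_{z'}\alpha_z$, which is not zero; these terms only telescope after summing over $z$ and using $d\alpha=0$, and even then only if the parameters $t_z$ and $t_{z'}$ have been chosen compatibly. Second, and more seriously, the locally chosen $t_z$ will in general have zeroes or poles at points of the fiber $g^{-1}(z)$ other than $\sigma(z)$, producing residues there that do not cancel against anything. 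Eliminating these is not a verification that ``the ambiguities contribute trivially'' via reciprocity; it requires either a compatible system of parameters across all fibers (which does not exist without further hypotheses) or a structural reduction --- e.g.\ to the Nisnevich-local model $(Y,\sigma(X))\simeq(X\times\AAA^1,\,X\times\{0\})$, where the standard coordinate serves as a single global $t$ --- together with a descent argument for the cycle complexes. That reduction is the actual content of the cited proof, and your outline neither carries it out nor flags it as the missing step: you point at the dependence of $\beta_z$ on the choice of $t$, but the harder problem (unwanted residues inside the fibers and incompatibility across fibers) is passed over.

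In short: the key local computation is correctly identified, but the passage from local primitives to a global chain homotopy is not a routine check of the residue axioms --- it is where the theorem lives --- and the proposal as written does not supply it.
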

\begin{proof}
See \cite[Lemma 4.1.5]{Fel19}.
\end{proof}

\begin{Def}
If $k[\![ x'_1,\dots, x'_m]\!]\to k[\![ x_1,\dots, x_n]\!]$ is a morphism between power series algebras over $k$ induced by $x'_i\to \sum_j a_{ij}x_j$ for some $a_{ij}\in k$, we call the induced map 
$\Spec(k[\![ x_1,\dots, x_n]\!]) \to \Spec(k[\![ x'_1,\dots, x'_m]\!])$ a linear projection. Such linear projection are determined by points in $\AAA^{mn}(k)$.
\end{Def}

\begin{Lem}
\label{FasSriCor2.4}
Let $X=\Spec(k[\![ x_1,\dots, x_n]\!])$ and $Y=\Spec(k[\![ z_1,\dots, z_{n-1}]\!])$. Let $Z,T\subset X$ be closed subsets such that $\dim Z+ \dim T < \dim X$ and $Z\cap T$ is supported on the closed point. Then for any sufficiently  general\footnote{See \cite[Theorem 14.14]{matsumura1986} for a more precise definition of \guillemotleft sufficiently general\guillemotright.} linear projection $p:X\to Y$, we have:
\begin{itemize}
\item $Z\neq p^{-1}(p(Z))$
\item $p^{-1}(p(Z))\cap T$ is also supported on the closed point.
\end{itemize}
\end{Lem}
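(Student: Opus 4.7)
The plan is to parameterize linear projections by their kernel direction $v \in \PP^{n-1}_k$ (a line in the cotangent space $\mathfrak{m}/\mathfrak{m}^2$, where $\mathfrak{m}$ is the maximal ideal of $k[\![x_1, \dots, x_n]\!]$) and show that each of the conditions (i) and (ii) defines a dense Zariski-open subset of the parameter space; a $v$ in the intersection then yields the desired projection.

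For condition (i), $Z \ne p^{-1}(p(Z))$ is equivalent to $\dim p_v(Z) = \dim Z$, i.e.\ $p_v|_Z$ is generically finite. This fails precisely when some irreducible component $Z_i$ of $Z$ is $p_v$-saturated, equivalently $Z_i$ is a cylinder with rulings in direction $v$. The locus of such bad $v$ is a closed subvariety of $\PP^{n-1}_k$ of dimension at most $\dim Z_i - 1 \le \dim Z - 1 \le n - 2$; taking the union over the finitely many components of $Z$ gives a proper closed bad locus.

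For condition (ii), assume (i) so that $W_v := p_v^{-1}(p_v(Z))$ has dimension $\dim Z + 1$. The standard dimension inequality gives
\begin{equation*}
\dim(W_v \cap T) \ge \dim W_v + \dim T - n = \dim Z + \dim T + 1 - n \le 0,
\end{equation*}
so a proper intersection forces $\dim(W_v \cap T) = 0$, which by the locality of $X$ means $W_v \cap T$ is supported at the closed point. To see properness holds generically, I would consider the incidence
\begin{equation*}
\mathcal{I} = \{(v, x) \in \PP^{n-1}_k \times X : x \in T \text{ and } \exists\, z \in Z \text{ with } p_v(z) = p_v(x)\}.
\end{equation*}
Since $p_v(z) = p_v(x)$ says that $z - x$ lies in the direction $k \cdot v$, the auxiliary space $\{(v, z, x) : z - x \in k \cdot v\}$ decomposes into a graph piece (the locus $z \ne x$, of dimension $\le \dim Z + \dim T \le n - 1$) and a diagonal piece $\{z = x\}$, the latter collapsing to $\PP^{n-1}_k \times \{\text{closed point}\}$ because $Z \cap T$ is just the closed point. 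Projecting, $\dim \mathcal{I} \le n - 1$; since $\mathcal{I} \to \PP^{n-1}_k$ is surjective (every $v$ is hit via $x = z = $ closed point), $\dim \mathcal{I} = n - 1$ and the generic fiber has dimension $0$, which yields condition (ii).

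The main obstacle is making these incidence-variety arguments rigorous in the formal setting of $X = \Spec(k[\![x_1, \dots, x_n]\!])$, where differences of points are not literally defined. The cleanest route is to first prove the analogous statement for the tangent cones $C_Z, C_T \subset \AAA^n_k$ at the closed point (honest affine varieties of the correct dimensions) via the dimension count above, and then to argue that the resulting proper intersection lifts to $W_v \cap T$ in the formal scheme via flatness of the associated graded construction.
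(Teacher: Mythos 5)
The paper offers no argument for this lemma: it simply cites \cite[Corollary~2.4]{FaselSrin08}. So the question is whether your blind reconstruction is sound, not whether it matches the paper's route.

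Your treatment of condition (i) is essentially right, and its rigorous form is exactly the tangent-cone one you gesture at: $p_v|_Z$ is finite over $Y$ iff $[v]\notin\PP(C_Z)$, by passing to the associated graded and using completeness; since $\dim\PP(C_Z)=\dim Z-1<n-1$ this is a dense open condition on $v\in\PP^{n-1}$. Your phrase ``cylinder with rulings in direction $v$'' is not literally meaningful in the formal setting, but the underlying dimension count is correct.

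There is, however, a genuine gap in your treatment of condition (ii). Your proposed fix -- reduce to the tangent cones $C_Z,C_T$ and then lift -- cannot work, for two reasons that you can see already in a small example. Take $X=\Spec k[\![x,y,z]\!]$, $Z=V(y-x^2,z)$, $T=V(y+x^2,z)$ (so $\dim Z+\dim T=2<3$ and $Z\cap T=V(x^2,y,z)$ is supported at the origin, so the hypotheses hold). Then $C_Z=C_T=V(y,z)$, so the tangent-cone analogue of the hypothesis ``$C_Z\cap C_T$ supported at the origin'' already fails, and for \emph{every} $v$ one has $\bar p_v^{-1}(\bar p_v(C_Z))\cap C_T\supseteq C_T$, which is one-dimensional; so the tangent-cone version of the \emph{conclusion} is false as well, even though the formal statement does hold for a general projection (e.g.\ $u=y+z$, $w=x$ gives $p^{-1}(p(Z))\cap T=V(y,x^2,z)$). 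The deeper reason is that the useful containment $C_{A\cap B}\subseteq C_A\cap C_B$ tells you nothing here, because $C_{W_v}$ need not be comparable to $\bar p_v^{-1}(\bar p_v(C_Z))$ in the direction you need, and ``$Z\cap T$ supported at the closed point'' does not descend to ``$C_Z\cap C_T=\{0\}$''. The incidence-variety count you run in $\AAA^n$ is morally the right heuristic, but to make it rigorous in the complete local setting one cannot pass to tangent cones; one needs an argument intrinsic to formal power series rings (for instance, the general-position statement of \cite[Theorem~14.14]{matsumura1986} applied to $\OO_{Z\cup T}$ together with a careful analysis of $p_v(Z)\cap p_v(T)$ inside the local scheme $Y$), which is the content of the cited Fasel--Srinivas argument.
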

\begin{proof}
See \cite[Corollary 2.4]{FaselSrin08}.
\end{proof}

\begin{Lem}
\label{FasSriProp3.6}
Let $X=\Spec(k[\![ x_1,\dots, x_n]\!])$ and $Y=\Spec(k[\![ z_1,\dots, z_{n-1}]\!])$. Let $Z\subset X$ be a proper closed subset. Then for any integer $i$ and any sufficiently general linear projection $p:X\to Y$, then extension of support
\begin{center}
$A^i_Z(X,M,*)\to A^i_{p^{-1}(p(Z))}(X,M,*)$
\end{center}
is zero.
\end{Lem}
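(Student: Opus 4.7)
The plan is to reduce the vanishing to Lemma~\ref{LemmIntersNulle} by exploiting the zero section of a suitably normalized form of $p$.

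First, I would perform a $k$-linear change of coordinates on $X$ so that $p$ becomes the standard coordinate projection $(x_1,\dots,x_n)\mapsto(x_1,\dots,x_{n-1})$; this change of variables is a $k$-automorphism of $X$ and so leaves the MW-cohomology groups unchanged. Then $p:X\to Y$ is smooth of relative dimension $1$ and admits the obvious zero section $\sigma:Y\to X,\ (z_1,\dots,z_{n-1})\mapsto(z_1,\dots,z_{n-1},0)$. Setting $W = p(Z)\subset Y$ and $\bar Z = p^{-1}(W) = W\times_Y X$, the restriction $\bar p = p|_{\bar Z}:\bar Z\to W$ is the trivial $\Spec(k[\![ x_n]\!])$-bundle, and $\sigma$ restricts to a closed immersion $\bar\sigma:W\hookrightarrow\bar Z$ which is a section of $\bar p$. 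Lemma~\ref{LemmIntersNulle} applied with $g=p$ and closed subset $W\subset Y$ then yields that $\bar\sigma_*:C_*(W,M,*)\to C_*(\bar Z,M,*)$ vanishes on homology.

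The heart of the argument is to factor the extension-of-support map $\iota_*:A^i_Z(X,M,*)\to A^i_{\bar Z}(X,M,*)$ through this vanishing pushforward. For sufficiently general $p$, Weierstrass preparation applied to a defining ideal of $Z$ ensures that $p|_Z:Z\to W$ is finite, so that the proper pushforward $(p|_Z)_*$ is defined on MW-cohomology. I would then aim to establish, on homology, the identity $\iota_* = \bar\sigma_*\circ (p|_Z)_*$. Such an identity should arise naturally from the $\AAA^1$-contraction $h:\AAA^1\times\bar Z\to\bar Z$, $(\lambda,w,x_n)\mapsto(w,\lambda x_n)$, which interpolates between $\id_{\bar Z}$ at $\lambda=1$ and $\bar\sigma\circ\bar p$ at $\lambda=0$. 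By homotopy invariance of the MW-cycle complex, this deformation produces a chain homotopy which, applied to a cycle supported in $Z$, expresses its class in $A^i_{\bar Z}(X,M,*)$ as $\bar\sigma_*(p|_Z)_*[\alpha]$, and the latter is zero by Lemma~\ref{LemmIntersNulle}.

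The main obstacle will be making this factorization rigorous in the twisted MW-cycle module framework. Concretely, one must (i) construct the cycle-level chain homotopy from $h$ and verify that it respects the virtual vector bundle twist $\VV$, (ii) keep track of the determinant line twists introduced by the smooth pullback $\bar p^*$ and the finite pushforward $(p|_Z)_*$, and (iii) correctly account for the quadratic multiplicities of $(p|_Z)_*$ arising from ramification of $p|_Z$ at non-generic points. Once these bookkeeping issues are settled, the argument is conceptually straightforward: any class in $A^i_Z(X,M,*)$ is, up to coboundaries in $C^*_{\bar Z}(X,M,*)$, in the image of $\bar\sigma_*$, which vanishes on homology by Lemma~\ref{LemmIntersNulle}.
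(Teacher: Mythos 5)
Your approach diverges from the paper's, and it contains a genuine gap that I do not think can be repaired as stated.

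The concrete problem is with the $\AAA^1$-contraction $h:\AAA^1\times\bar Z\to\bar Z$, $(\lambda,w,x_n)\mapsto(w,\lambda x_n)$. Here $\bar Z$ is a closed subscheme of $X=\Spec(k[\![ x_1,\dots,x_n]\!])$, so a function on $\bar Z$ is (the class of) a formal power series in $x_n$ with coefficients in $k[\![ x_1,\dots,x_{n-1}]\!]$. The would-be comorphism $h^{\#}$ sends $f=\sum_{i\geq 0}a_i(x_1,\dots,x_{n-1})x_n^i$ to $\sum_{i\geq 0}a_i\lambda^i x_n^i$, which is a genuine power series in $\lambda$ and therefore does not lie in $\OO_{\bar Z}[\lambda]=\OO(\AAA^1\times\bar Z)$. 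So $h$ is not a morphism of schemes, the chain homotopy does not exist, and the key factorization $\iota_*=\bar\sigma_*\circ(p|_Z)_*$ on which your argument rests is left unproved. Note that this is not a bookkeeping issue of the kind you flag at the end; it is the central geometric input that fails in the formal setting. (One could try replacing $\AAA^1$ by $\Spec k[\![\lambda]\!]$, but then one would need a form of homotopy invariance for the cycle complexes that is not available.)

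The paper avoids this difficulty entirely by a slicing trick rather than a contraction. Since $Z$ is proper in $X$, one chooses a nonzero non-unit $t$ with $Z\subset V(t)$; for a general linear projection $p$ the map $p|_{V(t)}:V(t)\to Y$ is finite. Forming the fibre product $X'=V(t)\times_Y X$, the inclusion $j:V(t)\hookrightarrow X$ induces a section $i':V(t)\to X'$ of the smooth relative-dimension-one projection $p':X'\to V(t)$, and this section \emph{already contains $Z$}. Consequently the pushforward $j_*:A^{i-1}_Z(V(t),M,*)\to A^i_Z(X,M,*)$ is an isomorphism, and Lemma~\ref{LemmIntersNulle} applies directly to the composite $A^{i-1}_Z(V(t))\xrightarrow{(i')_*}A^i_{i'(Z)}(X')\to A^i_{(p')^{-1}(Z)}(X')$ to make it vanish; pushing forward along the proper map $f:X'\to X$ and using $p^{-1}(p(Z))=f\bigl((p')^{-1}(Z)\bigr)$ gives the claim. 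In other words, the paper arranges for the section of the $1$-dimensional fibration to pass through $Z$, so no homotopy or factorization of the extension-of-support map is needed. If you want to keep your overall structure, the missing move is precisely this: instead of the zero section (which does not contain $Z$), base change along a finite cover $V(t)\to Y$ with $Z\subset V(t)$ to manufacture a section that does.
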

\begin{proof} (see also \cite{FaselSrin08})
As $Z$ is a proper closed subset of $X$, there exists a nonzero non-unit $t\in k[\![ x_1,\dots, x_n]\!]$ such that $Z\subset V(t)$. Let $j:V(t) \to X$ be the inclusion. Any sufficiently general linear projection $p:X\to Y$ is flat and has the property that $p_{|V(t)}:V(t)\to Y$ is finite. Consider the following fibre product:
\begin{center}
$ \xymatrix{
X' 
\ar[r]^f
\ar[d]_{p'}
&
X
\ar[d]^p
\\
V(t)
\ar[r]_{p_{|V(t)}}
\ar[ru]^j
&
Y.
}$
\end{center}
The inclusion $j:V(t) \to X$ induces a closed immersion $i':V(t) \to X')$ such that $fi'=j$. Observe that $V(t)$ is also a principal divisor in $X'$ (see \cite[Theorem 5.23]{Srinivas96}). As closed subsets, we have $p^{-1}(p(Z))=f(p'^{-1}(Z))$ and then it is enough to show that 
\begin{center}
$A^i_Z(X,M,*)\to A^i_{f(p'^{-1}(Z))}(X,M,*)$
\end{center}
is zero to get the result. The following diagram
\begin{center}
$\xymatrix{
A^i_Z(X,M,*)
\ar@{=}[r]
&
A^i_Z(X,M,*)
\ar[r]^-{\mathfrak{e}}
&
A^i_{f(p')^{-1}(Z)}(X,M,*)
\\
A^{i-1}_Z(V(t),M,*)
\ar[u]^{j_*}
\ar[r]_{(i')_*}
&
A^i_{i'(Z)}(X',M,*)
\ar[u]^{f_*}
\ar[r]_-{\mathfrak{e}}
&
A^i_{(p')^{-1}(Z)}(X',M,*)
\ar[u]^{f_*}
}$
\end{center}
is commutative, where $\mathfrak{e}$ is the extension of support. We can see that $j_*$ is an isomorphism. Therefore, Lemma \ref{LemmIntersNulle} shows that 
\begin{center}
$A^i_{i'(Z)}(X',M,*)\to A^i_{p'^{-1}(Z)}(X',M,*)$ 
\end{center}
is zero.
\end{proof}
\begin{Lem}
\label{InfiniteField}
Let $X$ be a scheme over $k$ and $g:X_{k(t)} \to X$ be the (smooth) base change. Then
\begin{center}
$g^*:A^*(X,M,*) \to A^*(X_{k(t)},M,-\LL_{X_{k(t)}/X}+*)$
\end{center}
is injective.
\end{Lem}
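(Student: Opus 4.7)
The plan is to factor the base change $g$ through the relative affine line. Write $g$ as the composition $X_{k(t)} \xrightarrow{j} X\times_k \AAA^1_k \xrightarrow{\pi} X$, where $\pi$ is the canonical projection and $j$ is the inclusion of the generic fibre of $X\times_k \AAA^1_k \to \AAA^1_k$; it is the cofiltered limit of the open immersions $X\times_k U \hookrightarrow X\times_k \AAA^1_k$ as $U$ ranges over nonempty open subschemes of $\AAA^1_k$. Thus $g^* = j^*\circ \pi^*$, and I would treat the two factors separately.

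First I would invoke homotopy invariance for Milnor-Witt cycle modules (cf.\ \cite[Section 9]{Fel18}) to deduce that $\pi^*$ is an isomorphism, inverted by pullback along the zero section $s_0 : X \to X\times_k \AAA^1_k$. Next, continuity of $A^*(-, M, *)$ under essentially smooth cofiltered limits gives the identification $A^*(X_{k(t)}, M, -\LL_{X_{k(t)}/X}+*) = \colim_U A^*(X\times_k U, M, *)$. Hence, for $\alpha\in A^*(X, M, *)$ with $g^*\alpha = 0$, there is a nonempty open $U\subset \AAA^1_k$ such that the pullback of $\pi^*\alpha$ to $X\times_k U$ vanishes; writing $W := \AAA^1_k \setminus U$ (a finite set of closed points) and $i: X\times_k W \hookrightarrow X\times_k \AAA^1_k$ for the closed immersion, the localization long exact sequence expresses $\pi^*\alpha = i_*\gamma$ for some class $\gamma$. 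The idea is then to exhibit a section of $\pi$ factoring through $X\times_k U$: applying such a section to the equality $\pi^*\alpha = i_*\gamma$ would kill the right-hand side while recovering $\alpha$ on the left. When $k$ is infinite, $U(k) \neq \emptyset$ and the section $s_c: X\to X\times_k \AAA^1_k$, $x\mapsto (x,c)$, at any $c\in U(k)$ works: $s_c^*\pi^* = \id$ and $s_c^* i_* = 0$ (its image is disjoint from $X\times_k W$), forcing $\alpha = 0$.

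The main obstacle is the case of a finite base field $k$, where nonempty opens of $\AAA^1_k$ may contain no $k$-rational point. I would handle it by picking two closed points $c_1, c_2 \in U$ whose residue-field extensions $\kappa(c_i)/k$ have coprime degrees (always possible, as $U$ contains cofinitely many closed points of every degree). The generalised sections $\tilde s_{c_i}: X_{\kappa(c_i)} \to X\times_k U$ yield, by the same argument as above, the vanishing of the pullback of $\alpha$ to $X_{\kappa(c_i)}$. Applying the finite pushforwards $(g_{c_i})_*$ together with the projection formula then produces annihilation relations $\tr_{\kappa(c_i)/k}\cdot \alpha = 0$ in the $\GW(k)$-module $A^*(X, M, *)$; a suitable $\GW(k)$-linear combination of these should then force $\alpha = 0$.
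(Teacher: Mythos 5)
The paper's own ``proof'' is merely a citation to \cite[Theorem 8.3]{Fel18}, so you have in effect supplied a self-contained argument where the paper gives none. Your overall strategy --- factor $g$ through $\AAA^1_X$, use homotopy invariance to invert $\pi^*$, use continuity of $A^*(-,M,*)$ under the cofiltered limit $X_{k(t)}=\lim_U X\times U$, and then use the localization sequence to write $\pi^*\alpha = i_*\gamma$ with support on $X\times W$ --- is sound, and in the infinite-field case the base-change formula for the Gysin pullback along a rational section $s_c$ with $c\in U(k)$ does kill $i_*\gamma$ while recovering $\alpha$, so the infinite case is correct.

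The finite-field case has a genuine gap at the last sentence. You assert that ``a suitable $\GW(k)$-linear combination'' of the relations $\tr_{\kappa(c_i)/k}\cdot\alpha=0$ forces $\alpha=0$, but this is not automatic in the quadratic setting: coprimality of the ranks $d_1,d_2$ does not, over a general field, make the trace forms generate the unit ideal of $\GW(k)$, and rank-$1$ elements of $\GW(k)$ need not be units (e.g.\ $2\ev{1}-\ev{-1}$ in $\GW(\RR)$). What saves the argument is a specific feature of finite fields: the rank map $\GW(\mathbb{F}_q)\to\ZZ$ is a ring homomorphism, an integral Bezout combination $a\,\tr_1+b\,\tr_2$ has rank $1$, and in $\GW(\mathbb{F}_q)\cong\ZZ\oplus\ZZ/2$ (the second summand generated by $\ev{a}-\ev{1}$, which squares to zero) every element of rank $1$ is a unit. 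This needs to be said, since the claim is false without using the structure of $\GW$ over finite fields. There is also a cleaner route that avoids the dichotomy altogether: $\infty\in\PP^1_k$ is always $k$-rational, and the specialization map at the valuation $v_\infty$ on $k(t)$ (built from \ref{itm:D4} and \ref{itm:D3} as in Rost's axiom (R3e)) gives a retraction $s_\infty\colon A^*(X_{k(t)},M,*)\to A^*(X,M,*)$ with $s_\infty\circ g^*=\id$; this is almost certainly the content of the cited Theorem 8.3 and yields injectivity directly, with no case split on $\lvert k\rvert$.
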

\begin{proof}
See \cite[Theorem 8.3]{Fel18}.
\end{proof}

\subsection{Proof of the main theorem}

\begin{The} \label{VanishingTheorem} Let $M$ be a Milnor-Witt cycle algebra. Let $X$ be a regular local scheme of dimension $n$ over $k$ and denote by $x_0$ its closed point. Let $V_X$ be a virtual vector bundle over $X$. Let $Z$ and $T$ be closed subsets of $X$ such that $Z\cap T=\{ x_0 \}$. Then the intersection product
\begin{center}
$A^i_Z(X,M,\VV_Z) \times A^j_T(X,M,\VV_T) \to A^{i+j}_{x_0}(X,M,\VV_{x_0})$
\end{center}
is zero for any $i,j\in \ZZ$.
\par In particular for $M=\KMW$, this is true for the intersection product defined on the Chow-Witt ring of $X$.

\end{The}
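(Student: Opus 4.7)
The plan is to adapt the Fasel--Srinivas strategy \cite{FaselSrin08} to the setting of MW-cycle algebras: if one can enlarge the support $Z$ of any class $\alpha \in A^i_Z$ to some $W \supsetneq Z$ in such a way that the extension-of-support map $A^i_Z \to A^i_W$ is identically zero, while preserving the intersection $W \cap T = Z \cap T = \{x_0\}$, then by functoriality of the intersection product with respect to support enlargement we obtain $\alpha \cdot \beta = \mathfrak{e}(\alpha) \cdot \beta = 0$ in $A^{i+j}_{x_0}$. I would proceed under the implicit hypothesis $\dim Z + \dim T < n$: without it, Lemmas \ref{FasSriCor2.4} and \ref{FasSriProp3.6} do not apply, and the vanishing is actually false (take complementary linear subspaces meeting only at the origin).

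First I would reduce to the case $X = \Spec(\kappa[[x_1, \dots, x_n]])$ with $\kappa$ infinite. By Cohen's structure theorem the completion $\widehat{X} = \Spec(\widehat{\OO}_{X,x_0})$ is identified with $\Spec(\kappa(x_0)[[x_1, \dots, x_n]])$, and faithful flatness of completion combined with the compatibility of MW-cycle module cohomology with filtered limits of essentially smooth morphisms lets me replace $X$ by $\widehat{X}$. If $\kappa(x_0)$ happens to be finite, a further base change to $\kappa(x_0)(t)$ is permitted by Lemma \ref{InfiniteField}, which says the pullback is injective, so vanishing on the extension implies vanishing before the base change.

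Having reduced to this situation, Lemma \ref{FasSriCor2.4} furnishes a sufficiently general linear projection $p \colon X \to Y = \Spec(\kappa[[z_1, \dots, z_{n-1}]])$ for which $W := p^{-1}(p(Z))$ strictly contains $Z$ and still satisfies $W \cap T = \{x_0\}$. Lemma \ref{FasSriProp3.6} then gives that the extension of support
\[ \mathfrak{e}\colon A^i_Z(X, M, \VV_Z) \longrightarrow A^i_W(X, M, \VV_W) \]
vanishes identically. Combining this with the compatibility of the intersection product with support enlargement in the first variable, we get, for every $\alpha \in A^i_Z(X, M, \VV_Z)$ and $\beta \in A^j_T(X, M, \VV_T)$,
\[ \alpha \cdot \beta \;=\; \mathfrak{e}(\alpha) \cdot \beta \;=\; 0 \quad \text{in } A^{i+j}_{W \cap T}(X, M, \VV_{x_0}) = A^{i+j}_{x_0}(X, M, \VV_{x_0}). \]
The case $M = \KMW$ of the Chow--Witt ring is then immediate.

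The main obstacle is the reduction step: one has to verify carefully that passing to the completion at $x_0$ (and to $\kappa(t)$ if needed) preserves MW-cycle module cohomology with supports \emph{and} commutes with the intersection product. The projection argument itself is essentially formal once Lemmas \ref{FasSriCor2.4} and \ref{FasSriProp3.6} are available; the remaining point of care is the compatibility of $\mathfrak{e}$ with the product, which should follow from the construction of the intersection product as the exterior product followed by Gysin pullback along the diagonal.
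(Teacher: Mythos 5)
Your proposal follows exactly the route taken in the paper: reduce to $\Spec(\kappa[\![x_1,\dots,x_n]\!])$ with $\kappa$ infinite via completion and Lemma~\ref{InfiniteField}, apply Lemmas~\ref{FasSriCor2.4} and \ref{FasSriProp3.6} to pick a sufficiently general linear projection $p$ for which $W=p^{-1}(p(Z))$ still meets $T$ only in $x_0$ while the extension-of-support map $A^i_Z\to A^i_W$ vanishes, and then conclude from the compatibility of the intersection product with support enlargement in the first factor. You also correctly observe that the hypothesis $\dim Z+\dim T<n$ needs to be added to the theorem's statement (it is present in Conjecture~1 and is required by Lemma~\ref{FasSriCor2.4}), since without it the conclusion fails, e.g.\ for complementary coordinate linear subspaces in $k[\![x_1,\dots,x_n]\!]$.
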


\begin{proof}
Let $\hat{X}$ be the completion of the local ring $X$ (for the ${x_0}$-adic valuation). By definition, we have $A^n_{x_0}(X,M,\VV_{x_0}) \simeq A^n_{\hat{x}_0}(\hat{X},M,\VV_{\hat{x}_0})$ for any integer $n$, and the following diagram
\begin{center}
$\xymatrix{
A^i_Z(X,M,\VV_Z)\times A^j_T(X,M,\VV_T)
\ar[r]
\ar[d]
&
A^{i+j}_{x_0}(X,M,\VV_{x_0})
\ar[d]
\\
A^i_{\hat{Z}}(\hat{X},M,\VV_{\hat{Z}})\times A^j_{\hat{T}}(\hat{X},M,\VV_{\hat{T}})
\ar[r]
&
A^{i+j}_{\hat{x}_0}(\hat{X},M, \VV_{\hat{x}_0})
}$
\end{center}
is commutative, where the vertical arrows are induced by the completion. Hence, it is enough to prove the result for a complete regular local scheme and we may assume that $X$ is the spectrum of the ring $A=k[\![ x_1,\dots, x_n]\!]$.
\par By Lemma \ref{InfiniteField}, we may also assume that $k$ is infinite. Now, put $B=k[\![ z_1,\dots, z_{n-1}]\!]$, and apply Lemma \ref{FasSriCor2.4} and \ref{FasSriProp3.6}: there exists a linear projection $p:X \to \Spec(B)$ such that:
\begin{enumerate}
\item The extension of support $\mathfrak{e}:A^i_Z(X)\to A^i_{p^{-1}(p(Z))}(X)$ is zero.
\item $p^{-1}(p(Z)) \cap T= x_0$.
\end{enumerate}
The conclusion follows from the following commutative diagram:
\begin{center}
$\xymatrix{
A^i_Z(X,M,\VV_Z)\times A^j_T(X,M,\VV_T)
\ar[r]
\ar[d]^{\mathfrak{e}\times \id}
&
A^{i+j}_{x_0}(X,M,\VV_{x_0})
\ar@{=}[d]
\\
A^i_{p^{-1}(p(Z))}(X,M,\VV_{p^{-1}(p(Z))})\times A^j_{{T}}(X,M,\VV_{T})
\ar[r]
&
A^{i+j}_{{x}_0}({X},M,\VV_{{x}_0}).
}$
\end{center}

\end{proof}

\appendix

\section{Recollection in motivic homotopy theory}
\label{Appendix_MW}
\subsection{Milnor-Witt cycle modules}

We denote by $\mathfrak{F}_k$ the category whose objects are the couple $(E, \mathcal{V}_E)$ where $E$ is a field over $k$ and $\mathcal{V}_E\in \mathfrak{V}(E)$ is a virtual vector space (of finite dimension over $F$). A morphism $(E,\mathcal{V}_E)\to (F, \mathcal{V}_F)$ is the data of a morphism $E\to F$ of fields over $k$ and an isomorphism $\mathcal{V}_E \otimes_E F \simeq \mathcal{V}_F$ of virtual $F$-vector spaces.
\par A morphism $(E,\mathcal{V}_E)\to (F, \mathcal{V}_F)$ in $\mathfrak{F}_k$ is said to be finite (resp. separable) if the field extension $F/E$ is finite (resp. separable).
\par We recall that a Milnor-Witt cycle modules $M$ over $k$ is a functor from $\mathfrak{F}_k$ to the category $\Ab$ of abelian groups  equipped with data 
\begin{description}
\item [\namedlabel{itm:D1}{D1}] (restriction maps) Let $\phi : (E,\mathcal{V}_E)\to (F, \mathcal{V}_F)$ be a morphism in $\mathfrak{F}_k$. The functor $M$ gives a morphism $\phi_*:M(E,\mathcal{V}_E) \to M(F,\mathcal{V}_F)$,
\item  [\namedlabel{itm:D2}{D2}] (corestriction maps) Let $\phi : (E,\mathcal{V}_E)\to (F, \mathcal{V}_F)$ be a morphism in $\mathfrak{F}_k$ where the morphism $E\to F$ is {\em finite}. There is a morphism  $\phi^*:M(F,\Om_{F/k}+\mathcal{V}_F) \to M(E,\Om_{E/k}+\mathcal{V}_E)$,
\item [\namedlabel{itm:D3}{D3}] (Milnor-Witt K-theory action) Let $(E,\mathcal{V}_E)$ and $(E,\mathcal{W}_E)$ be two objects of $\mathfrak{F}_k$. For any element $x$  of $\KMW(E,\mathcal{W}_E)$, there is a morphism 
\begin{center}
$\gamma_x : M(E,\mathcal{V}_E)\to M(E,\mathcal{W}_E+\mathcal{V}_E)$
\end{center}
so that the functor $M(E,-):\mathfrak{V}(E)\to \Ab$ is a left module over the lax monoidal functor $\KMW(E,-):\mathfrak{V}(E)\to \Ab$ (see \cite[Definition 39]{Yetter03}),
\item [\namedlabel{itm:D4}{D4}] (residue maps) Let $E$ be a field over $k$, let $v$ be a valuation on $E$ and let $\mathcal{V}$ be a virtual projective {$\mathcal{O}_v$-module} of finite type. Denote by $\mathcal{V}_E=\VV \otimes_{\mathcal{O}_v} E$ and $\VV_{\kappa(v)}=\VV \otimes_{\mathcal{O}_v} \kappa(v)$. There is a morphism
\begin{center}
$\partial_v : M(E,\VV_E) \to M(\kappa(v), - \NN_v+\VV_{\kappa(v)}),$
\end{center}
\end{description}   and satisfying compatibility rules (R1a),\dots, (R4a) .
Moreover, a Milnor-Witt cycle module $M$ satisfies axioms \namedlabel{itm:FD}{FD}  (finite support of divisors) and \namedlabel{itm:C}{C} (closedness) that enable us to define a complex $(C_p(X,M,\VV_X),d_p)_{p\in \ZZ}$ for any scheme $X$ and virtual bundle $\VV_X$ over $X$ where
\begin{center}

$C_p(X,M,\VV_X)=\bigoplus_{x\in X_{(p)}}
 M(\kappa(x),\Omega_{\kappa(x)/k}+\VV_x)$.
\end{center}

\begin{Exe}
The main example of MW-cycle module is given by Milnor-Witt K-theory $\KMW$ (see \cite{Fel18,Fel20bis} for more details).
\end{Exe}

\begin{Par} \label{FiveBasicMapsArticle2}
The complex $(C_p(X,M,\VV_X),d)_{p\geq 0}$ is called the {\em Milnor-Witt complex of cycles on $X$ with coefficients in $M$} and we denote by $A_p(X,M,\VV_X)$ the associated homology groups (called {\em Chow-Witt groups with coefficients in $M$}). We can define five basic maps on the complex level (see \cite[Section 4]{Fel18}):
\begin{description}
\item[Pushforward] Let $f:X\to Y$ be a $k$-morphism of schemes, let $\VV_Y$ be a virtual bundle over the scheme $Y$. The data \ref{itm:D2} induces a map
\begin{center}

$f_*:C_p(X,M,\VV_X)\to C_p(Y,M, \VV_Y)$.
\end{center}
\item[Pullback] Let $g:X\to Y$ be an essentially smooth morphism of schemes. Let $\VV_Y$ a virtual bundle over $Y$. Suppose $X$ connected (if $X$ is not connected, take the sum over each connected component) and denote by $s$ the relative dimension of $g$. The data \ref{itm:D1} induces a map
\begin{center}
$g^*:C_p(Y,M,\VV_Y) \to C_{p+s}(X,M,- \LL_{X/Y}+\VV_X)$.
\end{center}
 
 \item[Multiplication with units] Let $X$ be a scheme of finite type over $k$ with a virtual bundle $\VV_X$. Let $a_1,\dots, a_n$ be global units in $\mathcal{O}_X^*$. The data \ref{itm:D3} induces a map	
 \begin{center}
 $[a_1,\dots, a_n]:C_p(X,M,\VV_X) \to C_p(X,M,\ev{n}+\VV_X)$.
 \end{center}
 
 \item[Multiplication with $\eeta$]
 Let $X$ be a scheme of finite type over $k$ with a virtual bundle $\VV_X$. The Hopf map $\eeta$ and the data \ref{itm:D3} induces a map
 \begin{center}
 
 $\eeta:C_p(X,M,\VV_X)\to C_p(X,M,-\AAA^1_X+\VV_X)$.
 \end{center}
 
 \item[\namedlabel{itm:Bmaps}{Boundary map}]     
 Let $X$ be a scheme of finite type over $k$ with a virtual bundle $\VV_X$, let $i:Z\to X$ be a closed immersion and let $j:U=X\setminus Z \to X$ be the inclusion of the open complement. The data \ref{itm:D4} induces a map
 \begin{center}

 $\partial=\partial^U_Z:C_p(U,M,\VV_U) \to C_{p-1}(Z,M,\VV_Z)$.
 \end{center}

\end{description} 

 These maps satisfy the usual compatibility properties (see \cite[Section 5]{Fel18}). In particular, they induce maps $f_*,g^*, [u], \eeta, \partial^U_Z$ on the homology groups $A_*(X,M,*)$.
 
 \end{Par}

\begin{Def}

A pairing $M\times M'\to M''$ of MW-cycle modules over $k$ is given by bilinear maps for each $(E,\VV_E),(E,\mathcal{W}_E)$ in $\mathfrak{F}_k$
\begin{center}

$M(E,\VV_E)\times M'(E,\mathcal{W}_E)\to M''(E,\VV_E+\mathcal{W}_E)$
\\ $(\rho, \mu)\to \rho \cdot \mu$
\end{center}
which respect the $\KMW$-module structure and which are compatible with the data \textbf{D1}, \textbf{D2}, \textbf{D3} and \textbf{D4} in the sense of \cite[Definition 3.21]{Fel18}.
\par A ring structure on a MW-cycle module $M$ is a pairing 
\begin{center}
$M\times M\to M$
\end{center}
(in the sense of \cite[Definition 3.21]{Fel18}) which induces on 
\begin{center}
$\bigoplus_{\VV_E\in \mathfrak{V}(E)}M(E,\VV_E)$
\end{center}
an associative and $\epsilon$-commutative ring structure. In that case, we may say that $M$ is an \textit{algebra}.
\end{Def}
\begin{Exe}
By definition, a Milnor-Witt cycle module $M$ comes equipped with a pairing $\KMW \times M \to M$. When $M=\KMW$, this defines a ring structure on $M$.

\end{Exe}

\begin{Par}{\sc Cross products.} Let $M$ be a Milnor-Witt cycle module with a ring structure $M\times M \to M$ (see \cite[Definition 3.21]{Fel18}). Let $Y$ and $Z$ be two essentially smooth schemes over $k$ equipped with virtual vector bundles $\VV_Y$ and $\mathcal{W}_Z$. We define the cross product
\begin{center}

$\times : C_p(Y,M,\VV_Y)\times C_q(Z,M',\mathcal{W}_Z)\to C_{p+q}(Y\times Z,M'',\VV_{Y\times Z}+\mathcal{W}_{Y\times Z})$
\end{center}
as follows. For $y\in Y$, let $Z_y=\Spec \kappa(y) \times Z$, let $\pi_y:Z_y\to Z$ be the projection and let $i_y:Z_y\to Y\times Z$ be the inclusion. For $z\in Z$ we understand similarly $Y_z,\pi_z:Y_z\to Y$ and $i_z:Y_z\to Y\times Z$. We give the following two equivalent definitions:
\begin{center}
$\rho\times \mu = \displaystyle \sum_{y\in Y_{(p)}} (i_y)_*(\rho_y\cdot \pi_y^*(\mu))$,
\\ $\rho\times \mu = \displaystyle \sum_{z\in Z_{(q)}} (i_z)_*(\pi^*_z(\rho)\cdot \mu)$.
\end{center}
The cross product satisfies the expected properties (associativity, graded-commutativity, chain rule; see \cite[Section 11]{Fel18}).

\end{Par}

\begin{Par}

{\sc Intersection.} For $X$ smooth, the product induces a map
\begin{center}

$A^p(X,M,\VV_X)\times A^q(X,M,\mathcal{W}_X)\to A^{p+q}(X\times X,M,\VV_{X\times X}+\mathcal{W}_{X\times X})$.
\end{center}

By composing with the Gysin morphism 
\begin{center}

$\Delta^*:A^{p+q}(X\times X,M,\VV_{X\times X}+\mathcal{W}_{X\times X})\to A^{p+q}(X,M,-\TT_{\Delta}+\VV_X+\mathcal{W}_X)$
\end{center}
induced by the diagonal $\Delta:X\to X\times X$, we obtain the map
\begin{center}
$A^p(X,M,\VV_X)\times A^q(X,M,\mathcal{W}_X)\to A^{p+q}(X,M,-\TT_{\Delta}+\VV_X+\mathcal{W}_X)$.

\end{center}

\end{Par}
The preceding considerations and the functoriality of the Gysin maps prove the following theorem.
\begin{The}

If $M$ is a MW-cycle module with a ring structure and $X$ a smooth scheme over $k$, the intersection product turns 
\begin{center}
$\bigoplus_{\VV_X\in \mathfrak{V}(X)}A^*(X,M,\VV_X)$
\end{center}
 into a graded commutative associative algebra over 
\begin{center}
$\bigoplus_{\VV_X\in \mathfrak{V}(X)}A^*(X,\KMW,\VV_X)$.
\end{center}
\end{The}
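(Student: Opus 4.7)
The plan is to reduce the three required ring-theoretic properties (associativity, graded $\epsilon$-commutativity, and the $\KMW$-algebra structure) to the corresponding properties of the cross product established in \cite[Section 11]{Fel18}, combined with the functoriality of the Gysin pullback along the diagonal.

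First I would unwind the definition: the intersection product $\alpha\cdot \beta=\Delta^*(\alpha\times \beta)$, where $\Delta:X\to X\times X$ is the (regular, since $X$ is smooth) closed immersion. All three properties will follow by transporting properties of $\times$ along $\Delta^*$. For \textbf{associativity}, I would consider three classes $\alpha, \beta, \gamma \in A^*(X,M,*)$ (with appropriate virtual bundle twists), form the triple cross product $\alpha\times\beta\times\gamma \in A^*(X^3,M,*)$, and observe that both iterated intersection products arise from the triple diagonal $\Delta_3:X\to X^3$ via two factorizations: $\Delta_3=(\Delta\times \id_X)\circ \Delta=(\id_X\times \Delta)\circ \Delta$. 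Associativity of the cross product (chain rule, \cite[Section 11]{Fel18}) identifies $(\alpha\times\beta)\times\gamma=\alpha\times(\beta\times\gamma)$, and functoriality of the Gysin morphism along these two compositions then gives $\Delta^*((\Delta\times \id)^*(\alpha\times\beta\times\gamma))=\Delta_3^*(\alpha\times\beta\times\gamma)=\Delta^*((\id\times \Delta)^*(\alpha\times\beta\times\gamma))$, which is the desired equality.

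For \textbf{graded $\epsilon$-commutativity}, I would use the swap isomorphism $\tau:X\times X\to X\times X$ and the graded-commutativity of the cross product: $\tau^*(\alpha\times\beta)=\epsilon^{pq}(\beta\times\alpha)$ for classes of degrees $p$ and $q$ (with the appropriate $\epsilon$-sign governed by the MW-cycle-module structure). Since $\tau\circ\Delta=\Delta$, functoriality yields $\Delta^*=\Delta^*\circ\tau^*$, hence $\alpha\cdot\beta=\epsilon^{pq}\,\beta\cdot\alpha$, after keeping careful track of the cotangent twists $\TT_\Delta$ on both sides. For the \textbf{$\KMW$-algebra structure}, I would use that the pairing $\KMW\times M\to M$ furnished by axiom \ref{itm:D3} is compatible with the data \ref{itm:D1}--\ref{itm:D4} (see \cite[Definition 3.21]{Fel18}), which by the same cross-product-and-Gysin construction yields an action of $\bigoplus_{\VV}A^*(X,\KMW,\VV)$ on $\bigoplus_{\VV}A^*(X,M,\VV)$ satisfying the Leibniz/associativity compatibilities with the $M$-product.

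The main technical obstacle, as usual with Milnor-Witt-flavored intersection theory, is the \textbf{bookkeeping of virtual vector bundles and their determinants}: each Gysin pullback introduces a twist by $-\TT_\Delta$, and to compare $\Delta_3^*$ with $(\Delta\otimes \id)^*\circ\Delta^*$ one must check that the associated twists agree canonically (not merely up to isomorphism), so that the $\epsilon$-signs appearing in graded commutativity match on both sides. This is where the functoriality of Gysin maps for regular closed immersions of smooth schemes, together with the compatibility isomorphism $\TT_{\Delta_3}\simeq \TT_\Delta\oplus (\Delta\times\id)^*\TT_{\Delta\times\id}$, has to be invoked carefully; once these isomorphisms are fixed coherently, the remaining verifications are formal consequences of the cross-product properties proved in \cite[Section 11]{Fel18}.
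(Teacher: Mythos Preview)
Your proposal is correct and follows exactly the approach the paper indicates: the paper simply states that ``the preceding considerations and the functoriality of the Gysin maps prove the following theorem,'' and you have spelled out precisely those considerations (associativity and $\epsilon$-commutativity of the cross product from \cite[Section 11]{Fel18}, combined with functoriality of $\Delta^*$ along the relevant factorizations of the diagonal). Your additional care with the virtual-bundle bookkeeping is appropriate and goes beyond what the paper writes explicitly, but the strategy is the same.
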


In particular, we obtain a product on the Chow-Witt ring $\CHt(X)$ which coincides with the intersection product (defined in \cite[§3.4]{Fasel18bis}, see also \cite{Fasel13}). Indeed, our construction of Gysin morphisms follows the classical one (using deformation to the normal cone) and our cross products correspond to the one already defined for the Milnor-Witt K-theory (see \cite[§3]{Fasel18bis}).

\subsection{Homotopy modules}

\begin{Par}
We denote by $\SH(k)$ the stable homotopy category over $k$. It is equipped with the \textit{homotopy} t-structure given by the full subcategory $\SH_{\geq 0}(k)$ (resp. $\SH_{\leq 0}(k)$ consisting of $\PP^1$-spectra $M$ with
\begin{center}
$ \pi_n(M)_m=0$
\end{center}
for each $m\in \ZZ$ and $n<0$ (resp. $n>0$) (see \cite[§5.2]{Mor03}).
\par The heart $\SH(k)^{\heartsuit}$ of this t-structure is equivalent to the category of homotopy modules which, by definition, are Nisnevich sheaves from the category of smooth schemes over $k$ to the category of $\ZZ$-graded abelian groups satisfying the $\AAA^1$-homotopy invariance property. The main theorem of \cite{Fel19} is the following:

\end{Par}
\begin{The} \label{Thesis_Theorem}
The category of Milnor-Witt cycle  modules over $k$ (denoted by $\MW(k)$ is equivalent to the category of homotopy modules (or, equivalently, the heart of the stable homotopy category equipped with its homotopy t-structure):
\begin{center}
$\MW(k) \simeq \SH(k)^{\heartsuit}$.
\end{center}
\end{The}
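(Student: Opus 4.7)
The plan is to construct functors $\Psi\colon \SH(k)^{\heartsuit} \to \MW(k)$ and $\Phi\colon \MW(k) \to \SH(k)^{\heartsuit}$ and show they are mutually inverse equivalences. The template is D\'eglise's equivalence between Rost's cycle modules and the heart of $\mathbf{DM}(k)$ for the homotopy $t$-structure, but the present statement is harder on two counts: the grading is by virtual vector bundles rather than integers (so line-bundle twists must be tracked throughout), and sheaves in $\SH(k)^{\heartsuit}$ carry no a priori transfer structure, so corestrictions must be reconstructed from Gysin maps of the stable motivic homotopy category.

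For $\Psi$, I invoke Morel's identification of $\SH(k)^{\heartsuit}$ with the category of $\ZZ$-graded strictly $\AAA^1$-invariant Nisnevich sheaves equipped with $\Gm$-loop isomorphisms and a canonical $\KMW$-module structure. Given such a homotopy module $F=(F_n)_n$, I set $\Psi(F)(E,\VV_E) := F_{-\rk \VV_E}(E,\det \VV_E)$, where the twist by a line bundle is the Thom construction available for $\Gm$-stable spectra. Restrictions \ref{itm:D1} are sheaf pullbacks, corestrictions \ref{itm:D2} come from Gysin maps attached to finite separable field extensions, the action \ref{itm:D3} is the built-in $\KMW$-module structure, and residues \ref{itm:D4} are the differentials in Morel's Cousin--Gersten resolution of a strictly $\AAA^1$-invariant sheaf. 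The compatibility axioms R1a--R4a, together with FD and C, then reduce to standard compatibilities in the six-functor formalism for $\SH(k)$.

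For $\Phi$, given an MW-cycle module $M$, I define graded presheaves $\Phi(M)_n(X) := A^0(X,M,n\ev{1}+\VV_X)$ with all their natural transfer and residue data coming from the five basic operations of \ref{FiveBasicMapsArticle2}: Nisnevich descent follows from axiom C, strict $\AAA^1$-invariance from the homotopy invariance of Chow--Witt groups with coefficients in $M$, the $\Gm$-loop isomorphism from the combination of multiplication with units and multiplication with $\eeta$, and the $\KMW$-module structure from the canonical pairing $\KMW\times M\to M$ built into the definition of an MW-cycle module.

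The main obstacle is to show that $\Psi$ and $\Phi$ are mutually inverse. For $F\xrightarrow{\sim}\Phi\Psi(F)$, one must identify the Cousin complex produced by Morel's Gersten resolution of $F$ with the complex $C_*(X,\Psi(F),\VV_X)$ term by term and compatibly with differentials, so that $A^0(X,\Psi(F),*)\simeq F(X)$. For $M\xrightarrow{\sim}\Psi\Phi(M)$, one evaluates on henselizations at generic points and verifies that residues, corestrictions and the unit/$\eeta$-action are recovered from their sheaf-theoretic avatars. The delicate book-keeping lies in matching virtual vector-bundle twists on the cycle-module side with Thom/$\Gm$-twists on the spectrum side; this is where R2a (compatibility between corestriction and residue) and R4a (projection formulas and specialisation) play the crucial role.
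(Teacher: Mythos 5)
The paper itself does not prove this theorem: it is stated as the main result of \cite{Fel19} and used as a black box, so there is no in-paper argument to compare your attempt against. That said, your outline is a faithful high-level summary of the strategy that \cite{Fel19} actually follows: mutually inverse functors between $\MW(k)$ and $\SH(k)^{\heartsuit}$ built along D\'eglise's template for Rost cycle modules and $\mathbf{DM}(k)^{\heartsuit}$, with Morel's description of homotopy modules supplying the sheaf-side data, twists tracked via determinants, corestrictions recovered from Gysin maps, and residues extracted from the Rost--Schmid (Cousin) complex.

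Two points merit correction or caution. First, the formula $\Phi(M)_n(X):=A^0(X,M,n\ev{1}+\VV_X)$ has a dangling $\VV_X$: $\Phi(M)$ should be a $\ZZ$-graded Nisnevich sheaf, so the twist must be the fixed rank-$n$ trivial virtual bundle (the paper's notation for this is $\ev{n}$), with no free parameter; similarly the sign in $\Psi(F)(E,\VV_E):=F_{-\rk\VV_E}(E,\det\VV_E)$ should be matched to the grading convention in \cite{Fel19}, where the rank of the virtual twist is identified with the $\Gm$-weight without a flip. Second, and more substantively, the claim that axioms FD, C and R1a--R4a ``reduce to standard compatibilities in the six-functor formalism'' underrates the real content of \cite{Fel19}: verifying that the residues produced by Morel's Rost--Schmid differential satisfy the MW-cycle-module axioms requires the full force of Morel's structure theory for strongly/strictly $\AAA^1$-invariant sheaves, and conversely proving that $\Phi(M)$ is strictly $\AAA^1$-invariant and Nisnevich-local for an arbitrary $M$ is a genuine theorem resting on the Gersten resolution, not a formal consequence of C. You rightly flag the Cousin-complex comparison in the unit $F\to\Phi\Psi(F)$ as the main obstacle, but these two verifications are of comparable difficulty and should not be waved through.
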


%
%
%

  \bibliographystyle{alpha}
  \bibliography{exemple_biblio.bib}


\end{document}